\numberwithin{equation}{section}
\newtheorem{thm}{Theorem}[section]
\newtheorem{lem}[thm]{Lemma}
\newtheorem{prop}[thm]{Proposition}
\newtheorem{defn}[thm]{Definition}
\theoremstyle{definition}
\newtheorem{rem}[thm]{Remark}
\theoremstyle{remark}
\newcommand{\ds}{\displaystyle}
\newcommand{\de}{\partial}
\newenvironment{sistema}%
{\left\{\begin{array}{@{}l@{}}}{\end{array}\right.}
\patchcmd{\abstract}{\scshape\abstractname}{\textbf{\abstractname}}{}{}
\def\@makefnmark{} 
\title{Sharp estimates for the first $p$-Laplacian eigenvalue and for the $p$-torsional rigidity  on convex sets with  holes}
\author[G. Paoli, G. Piscitelli,  L. Trani]{
	Gloria Paoli, Gianpaolo Piscitelli, Leonardo Trani}
\address{Dipartimento di Matematica e Applicazioni ``R. Caccioppoli'', Universit\`a degli studi di Napoli Federico II \\ Via Cintia, Complesso Universitario Monte S. Angelo, 80126 Napoli, Italy.}
\email{gloria.paoli@unina.it}
\address{Dipartimento di Ingegneria Elettrica e dell'Informazione \lq\lq M. Scarano\rq\rq, Universit\`a degli Studi di Cassino e del Lazio Meridionale\\ Via G. Di Biasio n. 43, 03043 Cassino (FR), Italy.}
\email{gianpaolo.piscitelli@unicas.it}
\address{Dipartimento di Matematica e Applicazioni ``R. Caccioppoli'', Universit\`a degli studi di Napoli Federico II \\ Via Cintia, Complesso Universitario Monte S. Angelo, 80126 Napoli, Italy.}
\email{leonardo.trani@unina.it}
\begin{document}
\maketitle
\markright{SHARP ESTIMATES FOR THE FIRST $p$-LAPLACIAN EIGENVALUE}
\begin{abstract}
We study, in dimension $n\geq2$, the eigenvalue problem and the torsional rigidity for the $p$-Laplacian on convex sets with holes, with external Robin boundary conditions and internal Neumann boundary conditions. We prove that the annulus maximizes  the first eigenvalue and minimizes the torsional rigidity when  the measure and the external perimeter are fixed.
\\

\noindent MSC 2010: 35J25 - 35J92 - 35P15 - 47J30 \\
\noindent Keywords: Nonlinear eigenvalue problems - torsional rigidity - mixed boundary conditions - optimal estimates
\end{abstract}

\section{Introduction}
In this paper we study the  $p$-Laplacian  operator
\begin{equation*}
	-\Delta_p u:=-{\rm div}\left(|D u|^{p-2}D u\right)
	\end{equation*} 
	 defined on a convex set   $\Omega$ of $\mathbb R^n$, $n\geq2$, that contains  holes; more precisely we are considering  sets $\Omega$ of the form $\Omega =\Omega_0\setminus\overline \Theta$, where $\Omega_0\subseteq\mathbb{R}^n$ is an open bounded and convex set  and $\Theta\subset\subset \Omega_0$ is a finite union of sets, each of one  homeomorphic to a ball.
	In this setting, we study the eigenvalue problem and the torsion problem for the $p$-Laplacian  operator  and the boundary conditions that we impose are of Robin type on the exterior boundary $\Gamma_0:=\de \Omega_0$ and of Neumann type on the interior boundary $\Gamma_1:=\de \Theta$.
The first quantity we deal with is
	\begin{equation}\label{eigRNintro}
		\lambda_p^{RN}(\beta, \Omega)=  \min_{\substack{w\in W^{1,p}(\Omega)\\w\not \equiv0}} \dfrac{\ds\int _{\Omega}|Dw|^p\;dx+\beta\ds\int_{\Gamma_0}|w|^p \;d\mathcal{H}^{n-1}}{\ds\int_{\Omega}|w|^p\;dx}.
	\end{equation}	
This minimization problem variationally characterize the first eigenvalue, i.e. the lowest eigenvalue, of the following:
	\begin{equation}\label{case1intro}
		\begin{cases}
			-\Delta_p u=\lambda_p^{RN}(\beta, \Omega) |u|^{p-2} u & \mbox{in}\ \Omega\vspace{0.2cm}\\
			|D u|^{p-2}\dfrac{\de u}{\de \nu}+\beta |u|^{p-2}u=0&\mbox{on}\ \Gamma_0\vspace{0.2cm}\\ 
			|Du|^{p-2}\dfrac{\de u}{\de \nu}=0&\mbox{on}\ \Gamma_1,
		\end{cases}
	\end{equation}
	where $\de u/\de \nu$ is the outer normal derivative of $u$ and $\beta\in\mathbb{R}\setminus \{0 \}$ is the boundary parameter. 
By the way, we will only consider non zero values of the boundary parameter $\beta$, since the case $\beta=0$ is trivial, being the first eigenvalue identically zero and the relative eigenfunctions constant.
 
In our  first result  (Theorem \ref{main_1}), we prove that among the domains $\Omega$  defined as above, the annulus $A_{r_1,r_2}=B_{r_2}\setminus \overline{B_{r_1}}$, having the same measure of $\Omega$ and such that $P(B_{r_2})=P(\Omega_0)$, maximizes the first $p$-Laplacian eigenvalue, i.e
\begin{equation}
\label{faber}
\lambda_p^{RN}(\beta, \Omega)\leq \lambda_p^{RN}(\beta, A_{r_1,r_2}).
\end{equation}
Inequality \eqref{faber} is a Faber-Krahn (\cite{F,K}) type inequality. In particular, when $\beta\to+\infty$, our Theorem \ref{main_1} gives an answer to the open problem $5$ in \cite[Chap. 3]{Hen}, restricted to convex sets with holes. 

 If we consider the case of $\Omega $ bounded domain of $\mathbb{R}^n$ with Dirichlet boundary conditions on the whole $\de\Omega$, the classical Faber-Krahn inequality says that the first eigenvalue is minimized by the ball among sets of the same volume (see e.g. \cite{D2} and the references therein); for optimization of eigenvalues we also refer, for example, to \cite{AB, P, PS} .

If $\Omega\subseteq\mathbb{R}^n$ is a bounded set with Robin boundary condition on $\de\Omega$ and  $\beta>0$, it has been proved in \cite{B,D} for $p=2$ and in \cite{BD,DF} for $p\in]1,+\infty[$, that the first eigenvalue is minimized by the ball among domains of the same volume. 
On the other hand if $\beta$ is negative,  the problem is still open; in this direction, in \cite{FK} the authors showed that  the ball is a maximizer in the plane only  for small value of the parameter, having fixed the volume. Moreover,  if  $\beta$ is negative and  we fix the perimeter rather than the volume,  the ball maximizes the first eigenvalue among all  open, bounded, convex, smooth enough sets   (see \cite{AFK, BFNT}). Furthermore, we remark that in the case of a general Finsler metric, similar results holds for the anisotropic $p$-Laplacian with Dirichlet (\cite{BFK,DGP1}), Neumann (\cite{DGP2,Pi}) or Robin (\cite{GT,PT}) boundary conditions.

Makai \cite{M} and P\'olya \cite{Po} introduced the method of interior parallels, used by Payne and Weinberger in \cite{PW}, to study the Laplacian eigenvalue problem with external Robin boundary condition and with Neumann internal boundary condition in the plane. 
In our paper, we generalize these tools in any dimension. 
More precisely, our proof is based  on the use of the web functions, particular test functions used e.g. in \cite{ BNT, BFNT, CFG}, and on the study of their level sets. 

Similarly, but only for positive value of $\beta$, we also study the $p$-torsional rigidity type problem:
\begin{equation*}
\dfrac{1}{T_p^{RN}(\beta, \Omega)}= \min_{\substack{w\in W^{1,p}(\Omega)\\ w\not \equiv0}} \dfrac{\ds\int _{\Omega}|Dw|^p\;dx+\beta\ds\int_{\Gamma_0}|w|^p \;d\mathcal{H}^{n-1}}{\left|\ds\int_{\Omega}w\;dx\right|^p};
\end{equation*}
in particular, this problem leads to, up to a suitable normalization, 
\begin{equation*}
\begin{cases}
-\Delta_p u= 1 & \mbox{in}\ \Omega\vspace{0.2cm}\\
|D u|^{p-2}\dfrac{\de u}{\de \nu}+\beta|u|^{p-2} u=0&\mbox{on}\ \Gamma_0\vspace{0.2cm}\\
|Du|^{p-2}\dfrac{\de u}{\de \nu}=0&\mbox{on}\ \Gamma_1.
\end{cases}
\end{equation*}
The second main result of this paper (Theorem \ref{main_2}) states  that, among the domains $\Omega$  defined as in the beginning,  the annulus $A_{r_1,r_2}=B_{r_2}\setminus \overline{B_{r_1}}$, having the same measure of $\Omega$ and such that $P(B_{r_2})=P(\Omega_0)$, minimizes the $p$-torsional rigidity, i.e. 
  \begin{equation}
  \label{saint}
  T_p^{RN}(\beta, \Omega)\geq T_p^{RN}(\beta, A_{r_1,r_2}).
 \end{equation}


The equation \eqref{saint}, when $\Theta=\emptyset$, $p=2$ and Dirichlet boundary condition holds on the whole boundary, is the Saint-Venant inequality, by the name of the authors that first conjectured that the ball in the plane (under area constraint) gives the maximum in quantity \eqref{saint}. This is a relevant problem in the elasticity theory of beams \cite[Sec.35]{So}. It is known that the ball maximizes the torsional rigidity with Robin boundary conditions \cite{BG} among bounded open sets with Lipschitz boundary and given measure. Related results for the spectral optimization problems involving the rigidity are obtained also, for example, in \cite{BB, BBV, Br}.

Finally, we recall that the eigenvalue problem in the plane with reversed boundary conditions, i.e. Neumann on the external boundary $\de \Omega_0$ and Robin on the internal boundary $\de\Theta$,  has been studied in \cite{Her}. Moreover in \cite{DP} the authors generalize this result in dimension $n\geq 2$ for the eigenvalue problem and for the torsional rigidity.

The paper is organized as follows. In the Section 2 we introduce some notations and preliminaries and in the Section 3 we prove the main results.

\section{Notation and preliminaries}
\noindent

In the following,  by $|\Omega|$ we denote the $n-$dimensional Lebesgue measure of $\Omega$, by $P(\Omega)$ the perimeter of $\Omega$, by $\mathcal{H}^k$  the $k-$dimensional Hausdorff measure in $\mathbb{R}^n$. The unit open  ball in $\mathbb{R}^n$ will be denoted by $B_1$ and  $\omega_n:=|B_1|$. 
More generally, we denote with $B_r(x_0)$ the set $x_0+rB_1$, that is the ball centered at $x_0$ with measure $\omega_nr^n$, and by $A_{r_1,r_2}$ the open annulus $B_{r_2}\setminus \overline{B}_{r_1}$, where $\overline{B}_{r_1}$ is the closed ball centered at the origin and $r_1<r_2$.

Throughout this paper, we denote by $\Omega$ a set such that $\Omega =\Omega_0\setminus\overline \Theta$, where $\Omega_0\subseteq\mathbb{R}^n$ is an open bounded and convex set  and $\Theta\subset\subset \Omega_0$ is a finite union of sets, each of one  homeomorphic to a ball of $\mathbb{R}^n$ and with Lipschitz boundary.
We define $\Gamma_0:=\de \Omega_0$ and  $\Gamma_1:=\de \Theta$.


\subsection{Eigenvalue problems}
Let $1<p<+\infty$, 
we deal with the following $p$-Laplacian eigenvalue problem:
\begin{equation}\label{case1}
\begin{cases}
	-\Delta_p u=\lambda_p^{RN}(\beta, \Omega) |u|^{p-2} u & \mbox{in}\ \Omega\vspace{0.2cm}\\
|Du|^{p-2}\dfrac{\de u}{\de \nu}+\beta |u|^{p-2}u=0&\mbox{on}\ \Gamma_0\vspace{0.2cm}\\ 
|Du|^{p-2}\dfrac{\de u}{\de \nu}=0&\mbox{on}\ \Gamma_1.
\end{cases}
\end{equation}
We denote by $\de u/\de \nu $ the outer normal derivative of $u$ on the 
boundary and by  $\beta\in\mathbb{R}\setminus\{ 0\}$ the Robin boundary parameter,  observing that the case $\beta =+\infty$ gives asimptotically the Dirichlet boundary condition. Now we give the definition of eigenvalue and eigenfunction of problem \eqref{case1}.
\begin{defn}
	The real number $\lambda$ is an eigenvalue of \eqref{case1}   if and only if there exists a function $u\in W^{1,p}(\Omega)$, not identically zero, such that 
	\begin{equation*}
		\int_{\Omega}|D u|^{p-2}D uD\varphi \;dx+\beta \int_{\Gamma_0} |u|^{p-2}u\varphi\;d\mathcal{H}^{n-1}=\lambda\int_{\Omega}|u|^{p-2}u \varphi \;dx
	\end{equation*}
	for every $\varphi\in W^{1,p}(\Omega)$. The function $u$ is called  eigenfunction associated to $\lambda$.
\end{defn}
In order to compute the first eigenvalue we use the variational characterization, that is
\begin{equation}\label{eigRN}
\lambda_p^{RN}(\beta, \Omega)=\min_{\substack{w\in W^{1,p}(\Omega)\\ w\not \equiv0}}J_0[\beta,w]
\end{equation}
where 
$$ J_0[\beta,w]:=\dfrac{\ds\int _{\Omega}|Dw|^p\;dx+\beta\ds\int_{\Gamma_0}|w|^p \;d\mathcal{H}^{n-1}}{\ds\int_{\Omega}|w|^p\;dx}$$ 
We observe that $\Omega_0$ is convex and hence it has Lipschitz boundary; this ensures  the existence of  minimizers of the analyzed problems. 
\begin{prop}
Let $\beta\in\mathbb{R}\setminus \{0\}$. There exists a minimizer $u\in W^{1,p}(\Omega)$ of \eqref{eigRN},  which is a weak solution to \eqref{case1}. 
\end{prop}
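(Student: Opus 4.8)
The plan is to use the direct method of the calculus of variations. Since the functional $J_0[\beta,w]$ in \eqref{eigRN} is positively $0$-homogeneous, minimizing it is equivalent to minimizing the numerator
\[
\mathcal F[w]:=\int_\Omega |Dw|^p\,dx+\beta\int_{\Gamma_0}|w|^p\,d\mathcal H^{n-1}
\]
over $\mathcal M:=\{w\in W^{1,p}(\Omega):\ \|w\|_{L^p(\Omega)}=1\}$. The one ingredient one really needs is the compactness of the trace embedding $W^{1,p}(\Omega)\hookrightarrow L^p(\Gamma_0)$, which is available because $\Omega=\Omega_0\setminus\overline\Theta$ has Lipschitz boundary ($\Omega_0$ is convex, hence Lipschitz, and $\Theta$ has Lipschitz boundary). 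From this compactness one deduces the absorption inequality: for every $\eps>0$ there is $C_\eps>0$ with
\[
\int_{\Gamma_0}|w|^p\,d\mathcal H^{n-1}\le \eps\int_\Omega|Dw|^p\,dx+C_\eps\int_\Omega|w|^p\,dx,\qquad w\in W^{1,p}(\Omega),
\]
proved by a standard contradiction argument (a violating sequence, normalized in $L^p(\Gamma_0)$, would converge to $0$ strongly in $W^{1,p}(\Omega)$ and hence, by compact trace, in $L^p(\Gamma_0)$).

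Next I would run the direct method. Take a minimizing sequence $w_k\in\mathcal M$ with $\mathcal F[w_k]\to\lambda_p^{RN}(\beta,\Omega)=:\lambda$. If $\beta>0$ the estimate $\int_\Omega|Dw_k|^p\,dx\le\mathcal F[w_k]$ is immediate; if $\beta<0$ apply the absorption inequality with $\eps=1/(2|\beta|)$ to get $\tfrac12\int_\Omega|Dw_k|^p\,dx\le\mathcal F[w_k]+|\beta|C_\eps$. In either case $\{w_k\}$ is bounded in $W^{1,p}(\Omega)$, so up to a subsequence $w_k\rightharpoonup u$ weakly in $W^{1,p}(\Omega)$, strongly in $L^p(\Omega)$ by Rellich--Kondrachov, and strongly in $L^p(\Gamma_0)$ by the compact trace embedding. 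The map $w\mapsto\int_\Omega|Dw|^p\,dx$ is convex and strongly continuous, hence weakly lower semicontinuous, whereas $w\mapsto\beta\int_{\Gamma_0}|w|^p\,d\mathcal H^{n-1}$ is continuous along $L^p(\Gamma_0)$-strong convergence; therefore $\mathcal F[u]\le\liminf_k\mathcal F[w_k]=\lambda$. Since $\|u\|_{L^p(\Omega)}=\lim_k\|w_k\|_{L^p(\Omega)}=1$, the function $u$ is admissible, not identically zero, and $J_0[\beta,u]=\mathcal F[u]\le\lambda$, which by definition of $\lambda$ as the infimum forces equality: $u$ is a minimizer.

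Finally, to see that $u$ is a weak solution of \eqref{case1} with eigenvalue $\lambda$, fix $\varphi\in W^{1,p}(\Omega)$ and set $g(t):=J_0[\beta,u+t\varphi]$ for $t$ in a neighbourhood of $0$, where the denominator $\int_\Omega|u+t\varphi|^p\,dx$ stays positive. Because $z\mapsto|z|^p$ is $C^1$ for $p>1$ and the difference quotients are dominated (using $\big||z+th|^p-|z|^p\big|\le C(|z|^{p-1}+|h|^{p-1})\,|t|\,|h|$ together with H\"older's inequality and dominated convergence), $g$ is differentiable at $0$, and since $t=0$ is a minimum, $g'(0)=0$. Writing this out and using $\int_\Omega|u|^p\,dx=1$ and $J_0[\beta,u]=\lambda$ gives exactly
\[
\int_\Omega|Du|^{p-2}Du\cdot D\varphi\,dx+\beta\int_{\Gamma_0}|u|^{p-2}u\,\varphi\,d\mathcal H^{n-1}=\lambda\int_\Omega|u|^{p-2}u\,\varphi\,dx
\]
for every $\varphi\in W^{1,p}(\Omega)$, i.e. the weak formulation in the Definition above.

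I expect the only genuinely delicate point to be coercivity and lower-boundedness when $\beta<0$: a priori the numerator $\mathcal F$ need not be bounded below and the infimum could be $-\infty$, and it is precisely the compact trace embedding, through the absorption inequality, that rules this out and simultaneously gives the $W^{1,p}$-bound on minimizing sequences. Everything else — weak lower semicontinuity, passage to the limit in the lower-order terms, and the differentiation of the Rayleigh quotient — is the routine machinery of the direct method.
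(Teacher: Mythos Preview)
Your proof is correct and follows the same direct-method strategy as the paper: normalize in $L^p(\Omega)$, extract a weakly convergent subsequence in $W^{1,p}(\Omega)$, and pass to the limit using weak lower semicontinuity of the gradient term together with strong $L^p$-convergence of the trace on $\Gamma_0$. Your treatment of the case $\beta<0$ via the absorption (interpolation trace) inequality is in fact more transparent than the paper's, which obtains the $W^{1,p}$-bound on the minimizing sequence by a somewhat elliptic one-line estimate; you also supply the derivation of the Euler--Lagrange equation, which the paper leaves implicit.
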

\begin{proof} First we  consider the case $\beta>0$. Let $u_k\in W^{1,p}(\Omega)$ be a minimizing sequence of \eqref{eigRN} such that $||u_k||_{L^p(\Omega)}=1$. 
Then, being $u_k$ bounded in $W^{1,p}(\Omega)$, there exist a subsequence, still denoted by $u_k$, and a function $u\in W^{1,p}(\Omega)$ with $||u||_{L^p(\Omega)}=1$, such that $u_k\to u$ strongly in $L^p(\Omega)$ and almost everywhere and $D u_k\rightharpoonup D u$ weakly in $L^p(\Omega)$. As a consequence,
\phantom{ }$u_k$ converges strongly  to $u$ in $L^p(\de \Omega)$ and so almost everywhere on $\de \Omega$ to  $u$. Then, by weak lower semicontinuity:
\begin{equation*}
	\lim\limits_{k\to+\infty }J_0[\beta, u_k]\geq J_0[\beta, u].
\end{equation*}

We consider now the case $\beta<0$.  Let $u_k\in W^{1,p}(\Omega)$ be a minimizing sequence of \eqref{eigRN} such that $||u_k||_{L^p(\Omega)}=1$.  Now, since $\beta$ is negative, we have the equi-boundness of the functional $J_0[\beta,\cdot]$, i.e. there exists a constant $C<0$ such that $J_0[\beta,u_k]\leq C$ for every $ k\in \mathbb{N}$.
As a consequence
\begin{equation*}
||Du_k||^p_{L^p(\Omega)}-C||u_k||^p_{L^p(\Omega)}\leq -\beta,
\end{equation*}
and so
\begin{equation*}
	||u||^p_{W^{1,p}(\Omega)}\leq L,
\end{equation*}
where $L:=-\beta/\min\{1,-C \}$.
Then,  there exist a subsequence, still denoted by $u_k$, and a function $u\in W^{1,p}(\Omega)$ such that $u_k\to u$ strongly in $L^p(\Omega)$ and $D u_k\rightharpoonup D u$ weakly in $L^p(\Omega)$. So $u_k$ converges strongly to $u$ in $L^p(\de\Omega)$, 
and so
\begin{equation*}
J_0[\beta,u]\leq \liminf\limits_{k\to \infty }J_0[\beta,u_k]=\inf_{\substack{v\in W^{1,p}(\Omega)\\ v\not \equiv0}} J_0[\beta,v].
\end{equation*}
Finally, $u$ is strictly positive in $\Omega$ by the Harnack inequality (see \cite{T}).

\end{proof} 
Now we  state  some basic properties on the sign and the monotonicity of the first eigenvalue.
\begin{prop}\label{sign} 
	If $\beta>0$, then $\lambda_p^{RN}(\beta,\Omega)$  is positive and   if $\beta <0$, then $\lambda_p^{RN}(\beta,\Omega)$  is negative. Moreover, for all $\beta\in\mathbb{R}\setminus \{ 0\}$, $\lambda_p^{RN}(\beta,\Omega)$ is simple, that is all the associated eigenfunctions are scalar multiple of each other and can be taken to be positive.
\end{prop}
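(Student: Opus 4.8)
The plan is to split into the sign statement and the simplicity statement, treating the two cases $\beta>0$ and $\beta<0$ in parallel for the former. For the sign, I would argue directly from the variational characterization \eqref{eigRN}. If $\beta>0$, then for every admissible $w$ both terms in the numerator of $J_0[\beta,w]$ are nonnegative, so $\lambda_p^{RN}(\beta,\Omega)\geq 0$; it cannot vanish, since $\lambda_p^{RN}(\beta,\Omega)=0$ would force a minimizer $u$ with $\int_\Omega|Du|^p\,dx=0$ and $\int_{\Gamma_0}|u|^p\,d\mathcal H^{n-1}=0$, hence $u$ constant on the connected set $\Omega$ and $u\equiv 0$ on $\Gamma_0$, so $u\equiv 0$, a contradiction. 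If instead $\beta<0$, it suffices to exhibit one test function making $J_0$ negative: taking $w\equiv 1$ gives $J_0[\beta,1]=\beta\,P(\Omega_0)/|\Omega|<0$, whence $\lambda_p^{RN}(\beta,\Omega)<0$.

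For simplicity, let $u$ be a positive minimizer (which exists by the previous Proposition, after replacing $u$ by $|u|$ and using that $|Du|=|D|u||$ a.e.) normalized by $\|u\|_{L^p(\Omega)}=1$, and let $v$ be any eigenfunction associated to $\lambda:=\lambda_p^{RN}(\beta,\Omega)$. The standard approach is the hidden convexity / Picone-type argument: for $t\in[0,1]$ consider $\sigma_t:=\big(t\,u^p+(1-t)\,|v|^p\big)^{1/p}$ and show that the functional $w\mapsto \int_\Omega|Dw|^p\,dx+\beta\int_{\Gamma_0}|w|^p\,d\mathcal H^{n-1}$, evaluated at $\sigma_t$ with the constraint $\int_\Omega|\sigma_t|^p\,dx$ fixed, is handled by the pointwise inequality $|D\sigma_t|^p\leq t|Du|^p+(1-t)|Dv|^p$ (the convexity of $(a,b)\mapsto |b|^p/a^{p-1}$-type lemma used in Díaz–Saá). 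This yields $J_0[\beta,\sigma_t]\leq \lambda$, so $\sigma_t$ is also a minimizer and equality must hold a.e. in the pointwise inequality, which forces $Du$ and $Dv$ to be pointwise proportional, hence $v=c\,u$ for some constant $c$. For $\beta<0$ one must check the same convexity lemma still applies to the boundary term, which is immediate since $|{\cdot}|^p$ is convex and the argument only needs $\int_{\Gamma_0}|\sigma_t|^p\leq t\int_{\Gamma_0}u^p+(1-t)\int_{\Gamma_0}|v|^p$, an inequality in the direction that, combined with $\beta<0$, still gives the right sign. Positivity of each eigenfunction (up to sign) then follows from the Harnack inequality \cite{T} applied to $|v|$, exactly as in the existence proof.

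The main obstacle I anticipate is the simplicity argument in the regime $\beta<0$: the functional is no longer a sum of nonnegative terms, so one cannot simply quote the classical Díaz–Saá or Belloni–Kawohl uniqueness theorems verbatim, and care is needed to verify that the hidden-convexity estimate on the interior Dirichlet term together with the (oppositely signed) boundary term still produces the inequality $J_0[\beta,\sigma_t]\leq\lambda$ rather than the reverse. An alternative, if the convexity route proves delicate with the boundary term, is to use the strong maximum principle and the a.e. differentiability of $t\mapsto$ (the relevant Rayleigh quotient) to conclude that two positive first eigenfunctions must coincide after normalization; but the cleanest writeup is the $\sigma_t$-interpolation, and I would present that, flagging the sign bookkeeping for $\beta<0$ as the only nonroutine point.
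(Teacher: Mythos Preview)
Your proposal is correct and follows essentially the same route as the paper: the sign arguments (contradiction for $\beta>0$, constant test function for $\beta<0$) are identical, and simplicity is proved via the hidden-convexity interpolation $\eta_t=(tu^p+(1-t)w^p)^{1/p}$ as in \cite{BK,DG}. One correction: the obstacle you anticipate for $\beta<0$ is illusory, since by definition $|\sigma_t|^p=t\,u^p+(1-t)\,|v|^p$ pointwise, so $\int_{\Gamma_0}|\sigma_t|^p$ \emph{equals} $t\int_{\Gamma_0}u^p+(1-t)\int_{\Gamma_0}|v|^p$ and the boundary contribution is exactly linear in $t$ regardless of the sign of $\beta$; note that the mere inequality you invoke would actually point the wrong way after multiplication by $\beta<0$, so it is the equality that makes the argument go through.
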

\begin{proof}
Let $\beta>0$, then trivially  $\lambda_p^{RN}(\Omega)\geq0$. We prove that $\lambda_p^{RN}(\Omega)>0$  by contradiction, assuming that $\lambda_p^{RN}(\Omega)=0$. Thus, we consider a non-negative minimizer $u$ such that  $||{u}||_{L^p(\Omega)}=1$ and
\begin{equation*}
0=\lambda_p^{RN}(\Omega,\beta)=\ds\int_{\Omega}|D{u}|^p\;dx+\beta\int_{\Gamma_0}|{u}|^p\;d\mathcal{H}^{n-1}.
\end{equation*}
So, ${u}$ has to be constant in $\Omega$ and consequently ${u}$ is $0$ in $\Omega$, which contradicts the fact that the norm of ${u}$ is unitary.

If  $\beta<0$,  choosing the constant as test function in \eqref{eigRN}, we obtain
\begin{equation*}
\lambda_p^{RN}(\beta, \Omega)\leq\beta\dfrac{P(\Omega_0)}{|\Omega|}<0.
\end{equation*}
Let $u\in W^{1,p}(\Omega)$ be a function that achieves the infimum in \eqref{eigRN}.  First of all we observe that 
$$J_0[\beta, u]=J_0[\beta,|u|] ,$$
and this fact implies that any eigenfunction must have constant sign on $\Omega$ and so we can assume that $u\geq 0$.
In order to prove the simplicity of the eigenvalue, we proceed as in \cite{BK,DG}. We give here a sketch of the proof. Let $u,w$ be positive minimizers of the functional $J_0[\beta, \cdot]$, such that $||u||_{L^p(\Omega)}=||w||_{L^p(\Omega)}=1$. We define $\eta_t=\left( t u^p+(1-t)w^p  \right)^{1/p}$, with $t\in[0,1]$ and we have that $||\eta_t||_{L^p(\Omega)}=1$.  It holds that 
\begin{equation}\label{equality}
J_0[\beta,u]=\lambda_p^{RN}(\beta,\Omega)=J_0[\beta,w].
\end{equation}
Moreover by convexity the following inequality holds true:
\begin{equation}\label{eqconc}\begin{split}
|D\eta_t|^p & =\eta_t^p\left|\frac{t u^p \frac{D u}u+(1-t)w^p\frac{D w}{w}}{tu^p+(1-t)w^p}\right|^p\\
 & \leq \eta_t^p\left[\frac{t u^p}{tu^p+(1-t)w^p}\left|\frac{Du}u\right|^p+\frac{(1-t)w^p}{tu^p+(1-t)w^p}\left|\frac{Dw}w\right|^p\right]=t |Du|^p+(1-t)|Dw|^p.
\end{split}
\end{equation}
Using now \eqref{equality}, we obtain
\begin{equation*}
\lambda_p^{RN}(\beta,\Omega)\leq J_0[\beta,\eta_t]\leq tJ_0[\beta, u]+(1-t)J_0[\beta,w]=\lambda_p^{RN}(\beta,\Omega),
\end{equation*}
and then $\eta_t$ is a minimizer for $J_0[\beta,\cdot]$. So inequality \eqref{eqconc} holds as equality, and therefore $\frac{Du}u=\frac{Dw}w$. This implies that $D(\log u-\log w)=0$, that is $\log\frac u w=const$. We conclude passing to the exponentials.
\end{proof}

	\begin{prop} \label{monot} 
		The map $\beta\to \lambda_p^{RN}(\beta,\Omega)$ is Lipschitz continuous and non-decreasing with respect to $\beta\in\mathbb{R}$.
	Moreover $\lambda_p^{RN}(\beta,\Omega)$ is concave in $\beta$.
	\end{prop}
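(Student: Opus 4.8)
The plan is to read off all three properties directly from the variational characterization \eqref{eigRN}, by isolating the dependence on $\beta$. After normalization one has
\begin{equation*}
\lambda_p^{RN}(\beta,\Omega)=\inf\Big\{\,G_w(\beta)\ :\ w\in W^{1,p}(\Omega),\ \int_\Omega|w|^p\,dx=1\,\Big\},\qquad G_w(\beta):=\int_\Omega|Dw|^p\,dx+\beta\int_{\Gamma_0}|w|^p\,d\mathcal H^{n-1}.
\end{equation*}
For each admissible $w$ the map $\beta\mapsto G_w(\beta)$ is affine, with slope $\int_{\Gamma_0}|w|^p\,d\mathcal H^{n-1}\ge0$. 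An infimum of affine functions is concave, and an infimum of non-decreasing functions is non-decreasing; hence the concavity and the monotonicity of $\beta\mapsto\lambda_p^{RN}(\beta,\Omega)$ both follow immediately. That the infimum is a finite real number for every $\beta\in\mathbb{R}$ (an upper bound from a constant test function, a lower bound for $\beta<0$ from a trace inequality with small parameter) rules out any degeneracy.

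For the Lipschitz continuity I would argue quantitatively. Fix $\beta_1<\beta_2$ and let $u_1,u_2$ be positive $L^p$-normalized minimizers relative to $\beta_1,\beta_2$; testing the Rayleigh quotient at one parameter with the minimizer of the other and subtracting yields the two-sided bound
\begin{equation*}
(\beta_2-\beta_1)\int_{\Gamma_0}|u_2|^p\,d\mathcal H^{n-1}\ \le\ \lambda_p^{RN}(\beta_2,\Omega)-\lambda_p^{RN}(\beta_1,\Omega)\ \le\ (\beta_2-\beta_1)\int_{\Gamma_0}|u_1|^p\,d\mathcal H^{n-1},
\end{equation*}
which re-proves monotonicity and reduces everything to bounding $\int_{\Gamma_0}|u_\beta|^p\,d\mathcal H^{n-1}$ for $\beta$ in a bounded interval. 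Here the trace inequality on the Lipschitz domain $\Omega$ enters: for every $\eps>0$ there is $C_\eps$ with $\int_{\Gamma_0}|w|^p\,d\mathcal H^{n-1}\le\eps\int_\Omega|Dw|^p\,dx+C_\eps\int_\Omega|w|^p\,dx$. If $\beta\ge0$ then $\int_\Omega|Du_\beta|^p\,dx\le\lambda_p^{RN}(\beta,\Omega)$ and the bound is immediate; if $\beta<0$ one writes $\int_\Omega|Du_\beta|^p\,dx=\lambda_p^{RN}(\beta,\Omega)+|\beta|\int_{\Gamma_0}|u_\beta|^p\,d\mathcal H^{n-1}$, inserts this into the trace inequality, chooses $\eps$ small in terms of $|\beta|$, and absorbs the boundary term on the left. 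Since the resulting bound is locally uniform in $\beta$, the difference quotient of $\lambda_p^{RN}(\cdot,\Omega)$ is locally bounded, i.e. the function is locally Lipschitz on $\mathbb{R}$; alternatively, the concavity already established forces a finite function on $\mathbb{R}$ to be locally Lipschitz, so this can also be invoked directly.

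The soft parts — existence and positivity of minimizers, the affine structure of $G_w$, the precise trace inequality with small constant — have essentially been recalled already. The one point requiring genuine care is the absorption step for $\beta<0$: after substituting the trace inequality the coefficient multiplying $\int_{\Gamma_0}|u_\beta|^p\,d\mathcal H^{n-1}$ is of the form $\eps|\beta|$, and one must choose $\eps\le 1/(2|\beta|)$ so that this coefficient is $\le1/2$ uniformly over $\beta$ in the fixed interval, which is exactly what boundedness of the interval allows. This is the main (and only mild) obstacle; everything else is a formal consequence of writing $\lambda_p^{RN}(\beta,\Omega)$ as an infimum of affine functions of $\beta$ with nonnegative slopes.
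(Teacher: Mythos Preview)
Your argument is correct and rests on the same idea as the paper's: the variational formula exhibits $\lambda_p^{RN}(\beta,\Omega)$ as an infimum, over $L^p$-normalized $w$, of the affine functions $G_w(\beta)$ with nonnegative slopes. The packaging differs slightly. For concavity, the paper fixes $\beta_0$, takes the minimizer $w_0$ there, and shows $\lambda_p^{RN}(\beta,\Omega)\le G_{w_0}(\beta)=\lambda_p^{RN}(\beta_0,\Omega)+(\beta-\beta_0)\int_{\Gamma_0}|w_0|^p\,d\mathcal H^{n-1}$, identifying the slope with $(\lambda_p^{RN})'(\beta_0,\Omega)$; this tacitly uses differentiability, which has not yet been established. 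Your formulation ``infimum of affine functions is concave'' sidesteps that circularity. For the Lipschitz claim the paper gives no separate argument (presumably relying on concavity plus finiteness), whereas you supply the explicit two-sided difference-quotient bound and the absorption via the trace inequality; note that both routes yield only \emph{local} Lipschitz continuity on $\mathbb{R}$, which is all one can expect, since the slope $\int_{\Gamma_0}|u_\beta|^p\,d\mathcal H^{n-1}$ need not stay bounded as $\beta\to-\infty$.
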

	
\begin{proof}

Let $\beta_1,\beta_2\in\mathbb{R}$ such that $\beta_1<\beta_2$ and let   $ w\in W^{1,p}(\Omega)$ be  not identically $0$. We observe that
\begin{equation*}
\ds\int_{\Omega}|D{w}|^p\;dx+\beta_1\int_{\Gamma_0}|{w}|^p\;d\mathcal{H}^{n-1}\leq \ds\int_{\Omega}|D{w}|^p\;dx+\beta_2\int_{\Gamma_0}|{w}|^p\;d\mathcal{H}^{n-1}.
\end{equation*}
	Now, passing to the infimum on $w$ and taking into account the variatiational characterization, we obtain
	$\lambda_p^{RN}(\beta_1,\Omega)\leq\lambda_p^{RN}(\beta_2,\Omega)$.
	
	We prove that $\lambda_p^{RN}(\beta,\Omega)$ is concave in $\beta$. Indeed, for fixed $\beta_0\in\mathbb{R}$, we have to show that
	\begin{equation}\label{concavity}
		\lambda_p^{RN}(\beta,\Omega)\leq\lambda_p^{RN}(\beta_0,\Omega)+\left(\lambda_p^{RN}\right)'(\beta_0,\Omega)\left(\beta-\beta_0\right),
	\end{equation}
	for every $\beta\in\mathbb{R}$. 
	Let $w_0$ be the eigenfunction associated to $\lambda_p^{RN}(\beta_0,\Omega)$ and normalized such that  $\int_{\Omega}w_0^p\;dx=1$.
Hence, we have 
	\begin{equation}\label{c}
		\lambda_p^{RN}(\beta,\Omega)\leq \ds\int_{\Omega}|D{w_0}|^p\;dx+\beta\int_{\Gamma_0}|{w_0}|^p\;d\mathcal{H}^{n-1}.
	\end{equation}
	Now, summing and subtracting to the right hand side of \eqref{c} the quantity \\ \mbox{$\beta_0\int_{\Gamma_0}|w_0|^pd\mathcal{H}^{n-1}$}, taking into account that 
	$$ \lambda_p^{RN}(\beta_0,\Omega)=\ds\int_{\Omega}|D{w_0}|^p\;dx+\beta_0\int_{\Gamma_0}|{w_0}|^p\;d\mathcal{H}^{n-1},$$
	and the fact that $$ \left(\lambda_p^{RN}\right)'(\beta_0,\Omega)=\int_{\Gamma_0}|{w_0}|^p\;d\mathcal{H}^{n-1},$$ we obtain the desired result \eqref{concavity}. 
\end{proof}

Now we state a  result  relative to  the  eigenfunctions of problem \eqref{case1} on the annulus.

\begin{prop} \label{radial_theorem}
Let $r_1,r_2$ be two nonnegative real number such that $r_2> r_1$, and let $u$ be the minimizer of problem \eqref{eigRN}  on the annulus $A_{r_1,r_2}$. Then $u$ is strictly positive and radially symmetric, in the sense that $u(x)=:\psi(|x|)$. Moreover, if $\beta>0$, then $\psi'(r)<0$  and if $\beta <0$, then $\psi'(r)>0$.
\end{prop}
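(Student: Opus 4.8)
The plan is to exploit the rotational symmetry of the annulus together with the simplicity of $\lambda_p^{RN}$, and then to reduce the problem to the analysis of a one–dimensional ODE for $\psi$.

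First I would settle positivity and symmetry. Strict positivity is already available: $A_{r_1,r_2}=B_{r_2}\setminus\overline{B_{r_1}}$ is of the admissible form (take $\Omega_0=B_{r_2}$, $\Theta=B_{r_1}$), so the Harnack inequality used in the previous results gives $u>0$ in $A_{r_1,r_2}$. For the radial symmetry, observe that the functional $J_0[\beta,\cdot]$ is invariant under every orthogonal transformation $R\in O(n)$: the Dirichlet integral $\int_{A_{r_1,r_2}}|Dw|^p\,dx$, the boundary integral $\int_{\Gamma_0}|w|^p\,d\mathcal H^{n-1}$ over the sphere $\Gamma_0=\de B_{r_2}$, and $\int_{A_{r_1,r_2}}|w|^p\,dx$ are all unchanged if $w$ is replaced by $w\circ R$, since $R(A_{r_1,r_2})=A_{r_1,r_2}$ and $R$ is measure preserving. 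Hence $w\mapsto w\circ R$ maps minimizers to minimizers, so $u\circ R$ is again a positive eigenfunction associated to $\lambda_p^{RN}(\beta,A_{r_1,r_2})$. By the simplicity stated in Proposition \ref{sign}, $u\circ R=c(R)\,u$ for some $c(R)\in\mathbb{R}\setminus\{0\}$; positivity of both sides forces $c(R)>0$, and equality of the $L^p$-norms forces $c(R)=1$. Thus $u\circ R=u$ for every $R\in O(n)$, i.e. $u(x)=\psi(|x|)$ for some $\psi:[r_1,r_2]\to(0,+\infty)$, which is continuous on $[r_1,r_2]$ (one–dimensional Sobolev embedding, $r^{n-1}$ being bounded away from $0$ and $\infty$ on the relevant interval; when $r_1=0$ this is just continuity of $u$).

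Next I would write the eigenvalue equation in radial form and identify the boundary data. By the regularity theory for the $p$-Laplacian (see \cite{T}), $u\in C^{1,\alpha}_{loc}(A_{r_1,r_2})$, so $\psi\in C^1\big((r_1,r_2)\big)$ and $Du=\psi'(|x|)\,x/|x|$. Setting
\[
g(r):=r^{n-1}\,|\psi'(r)|^{p-2}\psi'(r),
\]
the weak formulation of \eqref{case1} tested with radial functions $\varphi(|x|)$, together with the divergence theorem on spherical shells, shows that $g$ is absolutely continuous on $[r_1,r_2]$ with
\[
g'(r)=-\lambda_p^{RN}(\beta,A_{r_1,r_2})\,r^{n-1}\,\psi(r)^{p-1}\qquad\text{for a.e. }r\in(r_1,r_2),
\]
and that the two boundary conditions become
\[
g(r_1)=0,\qquad g(r_2)=-\beta\,r_2^{n-1}\,\psi(r_2)^{p-1}.
\]
Indeed, the inner Neumann condition on $\Gamma_1$ reads $|\psi'(r_1)|^{p-2}\psi'(r_1)=0$, hence $\psi'(r_1)=0$ (this is automatic when $r_1=0$), while the outer Robin condition on $\Gamma_0$ reads $|\psi'(r_2)|^{p-2}\psi'(r_2)=-\beta\,\psi(r_2)^{p-1}$.

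Finally I would read off the sign of $\psi'$ from the monotonicity of $g$. By Proposition \ref{sign}, $\lambda_p^{RN}(\beta,A_{r_1,r_2})$ has the same sign as $\beta$, and $\psi>0$, so $g'$ has constant sign on $(r_1,r_2)$, opposite to that of $\beta$. If $\beta>0$, then $g'<0$ on $(r_1,r_2)$, so $g$ is strictly decreasing; since $g(r_1)=0$ this gives $g(r)<0$, hence $\psi'(r)<0$, for every $r\in(r_1,r_2]$. If $\beta<0$, then $g'>0$ on $(r_1,r_2)$, so $g$ is strictly increasing and $g(r_1)=0$ yields $g(r)>0$, hence $\psi'(r)>0$, for $r\in(r_1,r_2]$. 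This is the assertion. The only genuinely delicate step is the passage from the low–regularity weak formulation to these pointwise radial identities — in particular the absolute continuity of $g$ up to the endpoints and the identity $g(r_1)=0$ — which is handled, as usual, by testing with radial functions and using that $\psi$ is bounded on $[r_1,r_2]$ so that $\int_{r_1}^{r}t^{n-1}\psi(t)^{p-1}\,dt$ is finite; everything else is an elementary ODE argument.
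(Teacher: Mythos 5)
Your proof is correct and follows essentially the same route as the paper: radial symmetry via simplicity plus rotational invariance, then the sign of $\psi'$ read off from the monotonicity of $r^{n-1}|\psi'|^{p-2}\psi'$ combined with the Neumann condition $\psi'(r_1)=0$. You supply more detail than the paper does on the symmetry step (the $O(n)$-invariance and normalization argument) and on the regularity issues in passing from the weak formulation to the pointwise radial ODE, but the underlying argument is the same.
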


\begin{proof}
	The first claim follows from the simplicity of $\lambda_p^{RN}(\beta, A_{r_1,r_2})$ and from the rotational invariance of problem \eqref{case1}. For the second claim, we consider the problem \eqref{case1} with the boundary parameter $\beta>0$.  
The associated radial problem is:
\begin{equation*}
\begin{cases}
-\dfrac{1}{r^{n-1}}\left(|\psi'(r)|^{p-2} \psi'(r)r^{n-1}\right)'=\lambda_p^{RN}(\beta, A_{r_1,r_2}) \psi^{p-1}(r)\quad \text{if}\  r\in(r_1,r_2),\vspace{0.2cm}\\
\psi'(r_1)|\psi'(r_1)|^{p-2} =0, \vspace{0.2cm}\\
|\psi'(r_2)|^{p-2}\psi'(r_2)+\beta\psi^{p-1}(r_2)=0.
\end{cases}
\end{equation*}
We observe that for every $r\in(r_1,r_2)$
\begin{equation}\label{sign_derivative}
-\dfrac{1}{r^{n-1}}\left(|\psi'(r)|^{p-2} \psi'(r)r^{n-1}\right)'=\lambda^{RN}_p(\beta, A_{r_1,r_2}) \psi^{p-1}(r)>0,
\end{equation}
and, as a consequence,
\begin{equation*}
\left(|\psi'(r)|^{p-2} \psi'(r)r^{n-1}\right)'<0.
\end{equation*}
Taking into account the boundary conditions $\psi'(r_1)=0$, it follows that $\psi'(r)<0$, since
\begin{equation*}
|\psi'(r)|^{p-2}\psi'(r)r^{n-1}<0.
\end{equation*}

If $\beta<0$, by Remark \ref{sign}, $\lambda_p^{RN}(\beta, A_{r_1,r_2})<0$ and consequently the left side of the equation \eqref{sign_derivative} is negative, and hence 
 $\psi'(r)>0$.

\end{proof}

\subsection{Torsional rigidity}
Let $\beta>0$, we consider the torsional rigidity for the $p-$Laplacian. More precisely, we are interested in 
\begin{equation}\label{torRN}
\dfrac{1}{T_p^{RN}(\beta, \Omega)}=\min_{\substack{w\in W^{1,p}(\Omega)\\ w\not \equiv0}}K_0[\beta,w],
\end{equation}
where 
$$K_0[\beta,w]: =\dfrac{\ds\int _{\Omega}|Dw|^p\;dx+\beta\ds\int_{\Gamma_0}|w|^p \;d\mathcal{H}^{n-1}}{\left|\ds\int_{\Omega}w\;dx\right|^p}.$$
Problem \eqref{torRN}, up to a suitable normalization, leads to 
\begin{equation}\label{case1TRN}
\begin{cases}
-\Delta_p u=1 & \mbox{in}\ \Omega\vspace{0.2cm}\\
|Du|^{p-2}\dfrac{\de u}{\de \nu}+\beta|u|^{p-2} u=0&\mbox{on}\ \Gamma_0\vspace{0.2cm}\\
|Du|^{p-2}\dfrac{\de u}{\de \nu}=0&\mbox{on}\ \Gamma_1.
\end{cases}
\end{equation}
In the following, we state some results for the torsional rigidity, analogously to the ones stated in the previous section for the eigenvalue problems. The proofs can be easily adapted.

\begin{prop} Let $\beta >0$, then the following properties hold. 
\begin{itemize}
\item  	
There exists a positive minimizer $u\in W^{1,p}(\Omega)$ of \eqref{torRN} which is a weak solution to \eqref{case1TRN}  in $\Omega$. 

\item 	Let $r_1,r_2$ be two nonnegative real numbers such that $r_2> r_1$, and $\psi$ be the minimizer of \eqref{torRN}  on the annulus $A_{r_1,r_2}$. Then $\psi$ is strictly positive, radially symmetric and strictly decreasing.
\item The map $\beta\mapsto \dfrac 1{T_p^{RN}(\beta,\Omega)}$  is positive, Lipschitz continuous, non-increasing and concave with respect to $\beta$.
\end{itemize}
\end{prop}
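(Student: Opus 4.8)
The plan is to prove each of the three items exactly as its counterpart for the eigenvalue problem, replacing the denominator $\int_\Omega|w|^p\,dx$ by $\left|\int_\Omega w\,dx\right|^p$. For the existence statement I would run the direct method as in the proof of existence for \eqref{eigRN}: take a minimizing sequence $\{u_k\}\subset W^{1,p}(\Omega)$ for \eqref{torRN}, normalized by $\int_\Omega u_k\,dx=1$. Since $\beta>0$, the bound $K_0[\beta,u_k]\le C$ forces $\int_\Omega|Du_k|^p\,dx\le C$ and $\int_{\Gamma_0}|u_k|^p\,d\mathcal H^{n-1}\le C/\beta$, so, $\Omega$ being a connected Lipschitz domain with $\frac1{|\Omega|}\int_\Omega u_k\,dx$ fixed, the Poincar\'e--Wirtinger inequality makes $\{u_k\}$ bounded in $W^{1,p}(\Omega)$. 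Passing to a subsequence, $u_k\rightharpoonup u$ in $W^{1,p}(\Omega)$, $u_k\to u$ in $L^p(\Omega)$ and in $L^p(\Gamma_0)$ (trace compactness); hence $\int_\Omega u\,dx=1$ and, the numerator being weakly lower semicontinuous, $K_0[\beta,u]\le\liminf_k K_0[\beta,u_k]=1/T_p^{RN}(\beta,\Omega)$, so $u$ is a minimizer. Replacing $u$ by $\pm|u|$ one may assume $u\ge0$, and Harnack's inequality (\cite{T}) gives $u>0$ in $\Omega$. Finally, the Euler--Lagrange equation of $K_0[\beta,\cdot]$ at $u$ reads $-\Delta_p u=c$ in $\Omega$ with the boundary conditions of \eqref{case1TRN}, where $c=\big(\int_\Omega|Du|^p\,dx+\beta\int_{\Gamma_0}|u|^p\,d\mathcal H^{n-1}\big)/\int_\Omega u\,dx>0$, and rescaling $u$ by $c^{-1/(p-1)}$ yields a weak solution of \eqref{case1TRN}.

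For the radial symmetry on $A_{r_1,r_2}$, I would first observe that the minimizer of \eqref{torRN}, normalized by $\int_\Omega w\,dx=1$, is unique: it then solves $-\Delta_p w=1/T_p^{RN}(\beta,\Omega)$ with the boundary conditions of \eqref{case1TRN}, and this problem has a unique weak solution, being the Euler--Lagrange equation of the strictly convex coercive functional $w\mapsto\frac1p\int_\Omega|Dw|^p\,dx+\frac\beta p\int_{\Gamma_0}|w|^p\,d\mathcal H^{n-1}-\frac1{T_p^{RN}(\beta,\Omega)}\int_\Omega w\,dx$ (strict convexity of $\xi\mapsto|\xi|^p$ for $p>1$ determines $Dw$, and the boundary term with $\beta>0$ the remaining additive constant). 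Since \eqref{case1TRN} is invariant under rotations and $A_{r_1,r_2}$ is rotationally symmetric, $u\circ R$ is again a normalized minimizer for every rotation $R$, so by uniqueness $u\circ R=u$ and $u(x)=\psi(|x|)$. Then $\psi$ solves $-r^{1-n}\big(|\psi'|^{p-2}\psi'\,r^{n-1}\big)'=c>0$ on $(r_1,r_2)$ with $\psi'(r_1)=0$ and $|\psi'(r_2)|^{p-2}\psi'(r_2)+\beta\,\psi(r_2)^{p-1}=0$, so exactly as in the proof of Proposition~\ref{radial_theorem} one gets $\big(|\psi'|^{p-2}\psi'\,r^{n-1}\big)'=-c\,r^{n-1}<0$; hence $|\psi'|^{p-2}\psi'\,r^{n-1}$ is strictly decreasing and vanishes at $r_1$, whence $\psi'<0$ on $(r_1,r_2)$, and strict positivity of $\psi$ follows from Harnack (or from the boundary condition at $r_2$).

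For the dependence on $\beta$, I would mimic Proposition~\ref{monot}. Positivity: testing \eqref{torRN} with a nonzero constant gives $1/T_p^{RN}(\beta,\Omega)\le\beta P(\Omega_0)/|\Omega|^p<+\infty$, while H\"older's inequality on the denominator together with Proposition~\ref{sign} gives $K_0[\beta,w]\ge\lambda_p^{RN}(\beta,\Omega)\,|\Omega|^{-(p-1)}>0$, so $1/T_p^{RN}(\beta,\Omega)$ is a finite positive number. Monotonicity and concavity: for each fixed admissible $w$ the map $\beta\mapsto K_0[\beta,w]$ is affine with nonnegative slope $\int_{\Gamma_0}|w|^p\,d\mathcal H^{n-1}/\left|\int_\Omega w\,dx\right|^p$, so $\beta\mapsto1/T_p^{RN}(\beta,\Omega)$, being the pointwise infimum of such affine functions, is concave and nondecreasing in $\beta$ (equivalently, $\beta\mapsto T_p^{RN}(\beta,\Omega)$ is non-increasing). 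Lipschitz continuity on compact subintervals of $(0,+\infty)$ is then automatic from concavity and finiteness, or, as in Proposition~\ref{monot}, from the identity $\big(1/T_p^{RN}\big)'(\beta_0,\Omega)=\int_{\Gamma_0}|w_0|^p\,d\mathcal H^{n-1}/\left|\int_\Omega w_0\,dx\right|^p$ (with $w_0$ the torsion function at $\beta_0$) together with uniform a priori bounds on $w_0$ as $\beta_0$ ranges in a compact set.

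I expect no serious obstacle: the only genuine adjustments are reorganizing the compactness step of the existence proof around the denominator $\left|\int_\Omega u_k\,dx\right|^p$ rather than the $L^p$-normalization used in the excerpt, and supplying the uniform control of the torsion function behind the Lipschitz estimate; neither is deep, which is why the proposition states that the proofs ``can be easily adapted''.
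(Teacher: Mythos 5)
Your proposal is correct and is essentially the adaptation the paper has in mind (it gives no proof, only the remark that the eigenvalue arguments ``can be easily adapted''): direct method with the normalization moved to the linear constraint $\int_\Omega u_k\,dx=1$, radial symmetry via uniqueness of the solution of \eqref{case1TRN} as the minimizer of the associated convex energy, and monotonicity/concavity in $\beta$ from the fact that $1/T_p^{RN}(\beta,\Omega)$ is an infimum of affine functions of $\beta$, exactly as in Proposition \ref{monot}. One point worth making explicit: your (correct) computation shows that $\beta\mapsto 1/T_p^{RN}(\beta,\Omega)$ is non-\emph{decreasing}, i.e.\ it is $T_p^{RN}(\beta,\Omega)$ itself that is non-increasing, so the word ``non-increasing'' in the statement should be read as referring to $T_p^{RN}$ rather than to its reciprocal; your parenthetical already resolves this in the right direction.
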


\subsection{Quermassintegrals: definition and some properties}
For the content of this section we refer, for instance, to \cite{Sc}.
Let $\emptyset\neq \Omega_0\subseteq\mathbb{R}^n$ be  an open, compact and convex set. 
We define the outer  parallel body of $\Omega_0$ at distance $\rho$ as the  Minkowski sum
$$ \Omega_0+\rho B_1=\{ x+\rho y\in\mathbb{R}^n\;|\; x\in\Omega_0,\;y\in B_1 \}.$$
The Steiner formula asserts that
\begin{equation}\label{general_steiner}
|\Omega_0+\rho B_1|=\sum_{i=0}^{n}\binom{n}{i} W_i(\Omega_0)\rho^i. 
\end{equation}
and
\begin{equation}\label{general_steiner}
P(\Omega_0+\rho B_1)=n\sum_{i=0}^{n-1}\binom{n}{i} W_{i+1}(\Omega_0)\rho^i. 
\end{equation}
The coefficients $W_i(\Omega_0)$ are known as quermassintegrals and some of them  have an easy interpretation:
\begin{equation}
W_0(\Omega_0)=|\Omega_0|;\qquad n W_1(\Omega_0)=P(\Omega_0);\qquad W_n(\Omega_0)=\omega_n.
\end{equation}
Furthermore, we  have that
\begin{equation}\label{second W}
\lim\limits_{\rho \to 0^+} \dfrac{P(\Omega_0+\rho B_1)-P(\Omega_0)}{\rho}=n(n-1) W_2(\Omega_0)
\end{equation}
We  recall also  the Aleksandrov-Fenchel inequalities
\begin{equation}\label{aleksandrov-fenchel}
	\left(\dfrac{W_j(\Omega_0)}{\omega_n}\right)^{\frac{1}{n-j}}\geq \left(\dfrac{W_i(\Omega_0)}{\omega_n}\right)^{\frac{1}{n-i}},
\end{equation}
for $0\leq i<j<n$, with equality if and only if $\Omega_0$ is a ball.  If we put in the last inequality $i=0$ and $j=1$, we obtain the classical isoperimetric inequality, that is:
\begin{equation*}
P(\Omega_0)^{\frac{n}{n-1}}\geq n^{\frac{n}{n-1}} \omega_n^{\frac{1}{n-1}} |\Omega_0|.
\end{equation*}
We will also need the case in \eqref{aleksandrov-fenchel} when $i=1$ and $j=2$:
\begin{equation}\label{iso_W_2}
W_2(\Omega_0)\geq n^{-\frac{n-2}{n-1}}\omega_n^{\frac{1}{n-1}}P(\Omega_0)^{\frac{n-2}{n-1}}.
\end{equation}
In the next sections, we will denote by $d_e(x)$ the distance function  from the  boundary of $\Omega_0$. We
use the following notations:
\begin{equation*}
\Omega_{0,t}=\{ x\in\Omega_0\;:\; d_e(x)>t  \},\qquad t\in[0,r_{\Omega_0}],
\end{equation*}
where by $r_{\Omega_0}$ we denote the inradius of $\Omega_0$.
We state now the following two lemmas, whose proofs can be found in \cite{BNT} and \cite{BFNT}.
\begin{lem} 
	Let $\Omega_0$ be a bounded, convex, open set in $\mathbb{R}^n$. Then, for almost every $t\in(0,r_{\Omega_0} )$, we have
	\begin{equation*}
		-\dfrac{d}{dt}P(\Omega_{0,t})\geq n(n-1)W_2(\Omega_{0,t})
	\end{equation*}
	and equality holds if $\Omega_0$ is a ball.
\end{lem}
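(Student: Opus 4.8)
The plan is to compare the inner parallel set $\Omega_{0,t}$ with the \emph{outer} parallel set of the slightly smaller inner parallel set $\Omega_{0,t+s}$, and then to extract the linear term from the Steiner formula. Two structural facts set the stage. Since $\Omega_0$ is convex, the distance function $d_e$ is concave on $\overline{\Omega_0}$, so each superlevel set $\Omega_{0,t}=\{d_e>t\}$ is a bounded open convex set for $t\in[0,r_{\Omega_0})$; and since $d_e$ is $1$-Lipschitz, $d_e(y+z)\ge d_e(y)-|z|$ for all $y,z\in\mathbb{R}^n$. The second fact yields, for $0<t<t+s<r_{\Omega_0}$, the inclusion
\[
\Omega_{0,t+s}+sB_1\subseteq\Omega_{0,t},
\]
because if $y\in\Omega_{0,t+s}$ and $|z|<s$ then $d_e(y+z)>t+s-s=t$. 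Both sides are bounded convex sets, so monotonicity of the perimeter under inclusion of convex bodies gives $P(\Omega_{0,t+s}+sB_1)\le P(\Omega_{0,t})$.

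Next I would apply the Steiner formula for the perimeter of an outer parallel body to the convex set $\Omega_{0,t+s}$: the quantity $P(\Omega_{0,t+s}+sB_1)$ is a polynomial in $s$ of degree at most $n-1$ whose value at $s=0$ is $P(\Omega_{0,t+s})$ and whose coefficient of $s$ equals $n(n-1)W_2(\Omega_{0,t+s})$ (cf. \eqref{second W}); moreover all of its coefficients stay bounded as $s\to0^+$, since $\Omega_{0,t+s}\subseteq\Omega_0$ and the quermassintegrals are monotone under inclusion. Combining this with the inequality of the previous paragraph and rearranging,
\[
\frac{P(\Omega_{0,t})-P(\Omega_{0,t+s})}{s}\ \ge\ \frac{P(\Omega_{0,t+s}+sB_1)-P(\Omega_{0,t+s})}{s}\ =\ n(n-1)W_2(\Omega_{0,t+s})+O(s).
\]

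Finally I would let $s\to0^+$. The map $t\mapsto P(\Omega_{0,t})$ is non-increasing, since the sets $\Omega_{0,t}$ form a nested family of convex bodies; hence it is differentiable at almost every $t\in(0,r_{\Omega_0})$, and at every such $t$ the left-hand side above tends to $-\tfrac{d}{dt}P(\Omega_{0,t})$. For the right-hand side I would use that $\Omega_{0,t+s}\to\Omega_{0,t}$ in the Hausdorff distance as $s\to0^+$ (their union is $\Omega_{0,t}$), together with the continuity of $W_2$ with respect to Hausdorff convergence of convex bodies, so that $W_2(\Omega_{0,t+s})\to W_2(\Omega_{0,t})$. This yields $-\tfrac{d}{dt}P(\Omega_{0,t})\ge n(n-1)W_2(\Omega_{0,t})$ for a.e.\ $t$, as claimed. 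The equality case when $\Omega_0=B_R$ is immediate, since then $\Omega_{0,t}=B_{R-t}$, $P(B_{R-t})=n\omega_n(R-t)^{n-1}$ and $W_2(B_{R-t})=\omega_n(R-t)^{n-2}$, so both sides equal $n(n-1)\omega_n(R-t)^{n-2}$.

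The conceptual heart is the comparison inclusion $\Omega_{0,t+s}+sB_1\subseteq\Omega_{0,t}$, which is elementary; the only points requiring a little care are the almost-everywhere differentiability of $t\mapsto P(\Omega_{0,t})$ and the uniform boundedness of the Steiner coefficients as $s\to0^+$, but both are standard facts from the theory of convex bodies. A complete argument can be found in \cite{BNT,BFNT}.
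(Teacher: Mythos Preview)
Your argument is correct: the inclusion $\Omega_{0,t+s}+sB_1\subseteq\Omega_{0,t}$ together with monotonicity of perimeter for nested convex bodies and the Steiner expansion \eqref{general_steiner}--\eqref{second W} indeed yields the inequality after letting $s\to0^+$, and the ball computation is exact. The paper does not supply its own proof of this lemma but simply refers to \cite{BNT,BFNT}, the same sources you cite; your sketch is precisely the argument given there, so there is no alternative approach to compare.
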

By simply applying the chain rule formula and recalling that $|D d_e(x)|=1$ almost everywhere, it remains proved the following.
\begin{lem}
	Let $f:[0,+\infty)\to[0,+\infty) $  be a non decreasing $C^1$ function   and let $\tilde f:[0,+\infty)\to[0,+\infty) $  a non increasing  $C^1$ function. We define  $u(x):=f(d_e(x))$,  $\tilde u(x):=\tilde f(d_e(x))$ and 
	\begin{equation*}
	\begin{split}
	E_{0,t}:=\{x\in \Omega_0\;:\; u(x)>t\},\\
	\tilde E_{0,t}:=\{x\in \Omega_0\;:\; \tilde u(x)<t\}.\\
	\end{split}
	\end{equation*}
	Then,
	\begin{equation}\label{derivata_composta_super1}
		-\dfrac{d}{dt}P(E_{0,t})\geq n (n-1)\dfrac{W_2(E_{0,t})}{|D u|_{u=t}},
	\end{equation}
	and 
		\begin{equation}\label{derivata_composta_sub1}
		\dfrac{d}{dt}P(\tilde E_{0,t})\geq n (n-1)\dfrac{W_2(\tilde E_{0,t})}{|D \tilde u|_{\tilde u=t}}.
	\end{equation}
\end{lem}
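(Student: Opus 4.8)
The plan is to reduce both inequalities to the preceding lemma on the parallel sets $\Omega_{0,t}$ by means of a one-dimensional change of variable, exploiting that $u$ and $\tilde u$ are monotone functions of the single scalar quantity $d_e$.

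First I would identify the level sets with parallel bodies. Since $f$ is non-decreasing, choosing $s(t)$ with $f(s(t))=t$ we have
\[
E_{0,t}=\{x\in\Omega_0 : f(d_e(x))>t\}=\{x\in\Omega_0 : d_e(x)>s(t)\}=\Omega_{0,s(t)},
\]
so that $P(E_{0,t})=P(\Omega_{0,s(t)})$ and $W_2(E_{0,t})=W_2(\Omega_{0,s(t)})$; on the set of parameters where $f'>0$ the map $s(\cdot)$ is a $C^1$ inverse of $f$ with $s'(t)=1/f'(s(t))$. Likewise, since $\tilde f$ is non-increasing, $\tilde E_{0,t}=\{x\in\Omega_0 : \tilde f(d_e(x))<t\}=\Omega_{0,\tilde s(t)}$ with $\tilde f(\tilde s(t))=t$ and $\tilde s'(t)=1/\tilde f'(\tilde s(t))\leq 0$.

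Second I would compute the gradient modulus on a level set: by the chain rule $Du=f'(d_e)\,Dd_e$, and since $|Dd_e|=1$ almost everywhere and $f'\geq 0$ this gives $|Du|_{u=t}=f'(s(t))$; similarly $|D\tilde u|_{\tilde u=t}=|\tilde f'(\tilde s(t))|=-\tilde f'(\tilde s(t))$, whence $\tilde s'(t)=-1/|D\tilde u|_{\tilde u=t}$. Then I would differentiate $P(E_{0,t})=P(\Omega_{0,s(t)})$ by the chain rule and apply the previous lemma at $s=s(t)$:
\[
-\frac{d}{dt}P(E_{0,t})=-\frac{d}{ds}P(\Omega_{0,s})\Big|_{s=s(t)}\cdot s'(t)=\frac{1}{f'(s(t))}\left(-\frac{d}{ds}P(\Omega_{0,s})\Big|_{s=s(t)}\right)\geq n(n-1)\frac{W_2(\Omega_{0,s(t)})}{f'(s(t))},
\]
which is precisely the first asserted inequality. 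For the second, the same manipulation yields
\[
\frac{d}{dt}P(\tilde E_{0,t})=\frac{d}{ds}P(\Omega_{0,s})\Big|_{s=\tilde s(t)}\cdot\tilde s'(t)=\frac{1}{|D\tilde u|_{\tilde u=t}}\left(-\frac{d}{ds}P(\Omega_{0,s})\Big|_{s=\tilde s(t)}\right)\geq n(n-1)\frac{W_2(\tilde E_{0,t})}{|D\tilde u|_{\tilde u=t}},
\]
the required positivity being produced by $\tilde s'(t)=-1/|D\tilde u|_{\tilde u=t}\leq 0$.

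The only point requiring care is the set of $t$ for which $f'$ (respectively $|\tilde f'|$) vanishes: there $s(t)$ is not uniquely determined and the right-hand side is to be read as $+\infty$, so the statement is vacuous, whereas away from this set the change of variable is a genuine $C^1$ diffeomorphism and the ``almost every $t$'' of the previous lemma transfers to almost every $t$ in the range of $f$ (respectively $\tilde f$). I do not expect any substantive obstacle beyond this bookkeeping, since the geometric content is entirely carried by the lemma on $\Omega_{0,t}$.
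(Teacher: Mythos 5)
Your argument is correct and coincides with the proof the paper intends: the authors dispatch this lemma with the single remark that it follows ``by simply applying the chain rule formula and recalling that $|D d_e(x)|=1$ almost everywhere,'' which is exactly the reduction you carry out — identifying $E_{0,t}$ and $\tilde E_{0,t}$ with parallel bodies $\Omega_{0,s(t)}$, computing $s'(t)=\pm 1/|Du|_{u=t}$, and invoking the preceding lemma. Your closing remark on the degenerate set where $f'$ vanishes is a sensible piece of bookkeeping that the paper leaves implicit.
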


\section{Proof of the main result}
In this section we state and prove the main results. In the first theorem, we study the problem \eqref{eigRN}, in the second one the problem \eqref{torRN}. We consider a set $\Omega$ as defined at the beginning of Section $2$.
\begin{thm}\label{main_1}
	Let $\beta\in\mathbb{R}\setminus\{0\}$ and let $\Omega$ be such that $\Omega =\Omega_0\setminus\overline \Theta$, where $\Omega_0\subseteq\mathbb{R}^n$ is an open bounded and convex set  and $\Theta\subset\subset \Omega_0$ is a finite union of sets, each of one  homeomorphic to a ball of $\mathbb{R}^n$ and with Lipschitz boundary.  Let     $A=A_{r_1,r_2}$  be the annulus having the same measure of $\Omega$ and such that $P(B_{r_2})=P(\Omega_0)$. Then,
\begin{gather*}
	\label{in_eig_RN}		\lambda_p^{RN}(\beta, \Omega)\leq \lambda_p^{RN}(\beta, A).
							\end{gather*}
\end{thm}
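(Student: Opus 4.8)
The plan is to use the web-function (interior-parallel) method: take the eigenfunction $u$ on $\Omega$, replace it by a function depending only on the distance $d_e(x)$ to $\Gamma_0$, transplant this onto the annulus $A$, and show that the Rayleigh quotient does not increase. Since the target is an upper bound for $\lambda_p^{RN}(\beta,\Omega)$ — wait, it is a \emph{lower} bound we need to beat: we want $\lambda_p^{RN}(\beta,\Omega)\le\lambda_p^{RN}(\beta,A)$, so we must produce, starting from the \emph{optimal radial} function $\psi$ on $A$, a competitor on $\Omega$ with quotient $\le\lambda_p^{RN}(\beta,A)$. So concretely: let $\psi=\psi(|x|)$ be the first eigenfunction on $A=A_{r_1,r_2}$, which by Proposition \ref{radial_theorem} is radial and strictly monotone. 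Define on $\Omega_0$ the function $v(x):=\psi(r_2-d_e(x))$, i.e. we read off $\psi$ along interior parallels of $\Omega_0$ starting from $\Gamma_0$; restrict $v$ to $\Omega=\Omega_0\setminus\overline\Theta$. The claim will be that $J_0[\beta,v]\le\lambda_p^{RN}(\beta,A)$, which by the variational characterization \eqref{eigRN} gives the theorem.

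The three ingredients to compare are the numerator gradient term, the boundary term, and the denominator. The boundary term is immediate: $v=\psi(0)$ on $\Gamma_0$ and $P(\Gamma_0)=P(\Omega_0)=P(B_{r_2})=P(\partial A)_{\mathrm{outer}}$, so the two boundary integrals coincide. For the volume and gradient terms I would use the coarea formula along the level sets $E_{0,t}=\{x\in\Omega_0:d_e(x)>t\}$ versus the spheres in $A$, together with the key geometric inequality \eqref{derivata_composta_super1}, i.e. $-\tfrac{d}{dt}P(E_{0,t})\ge n(n-1)W_2(E_{0,t})/|Dv|_{v=t}$, and the Aleksandrov--Fenchel inequality \eqref{iso_W_2} bounding $W_2$ below by a power of the perimeter. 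Since $|Dv|=|\psi'|$ (because $|Dd_e|=1$ a.e.) is constant on each parallel, the gradient integral over $\Omega_0$ is $\int |\psi'(r_2-t)|^p\,P(E_{0,t})\,dt$, and one wants to show this is $\le$ the corresponding annular integral $\int|\psi'(r)|^p\,P(\partial B_r)\,dr$ (for $\beta>0$, and similarly with reversed inequality directions for $\beta<0$). The comparison of perimeters $P(E_{0,t})$ with $n\omega_n r^{n-1}$ at corresponding "volume levels", controlled by the ODE the perimeter profile satisfies via the above two lemmas, is the standard Payne--Weinberger/P\'olya mechanism; removing $\Theta$ only \emph{decreases} the volume of each superlevel set, which must be tracked carefully since the radii $r_1,r_2$ of $A$ are pinned by $|A|=|\Omega|$ and $P(B_{r_2})=P(\Omega_0)$.

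The subtle point — and what I expect to be the main obstacle — is the bookkeeping of the hole $\Theta$. When $\beta>0$ the eigenfunction $\psi$ on the annulus is \emph{decreasing} in $r$, so near the inner boundary $r=r_1$ the values of $\psi$ are large; transplanting via $v(x)=\psi(r_2-d_e(x))$ puts these large values deep inside $\Omega_0$, precisely where $\Theta$ has been removed, so that the "missing mass" from the hole sits where $v$ is largest. One must show that the interplay between (i) the lost volume in the denominator, (ii) the possibly smaller gradient integral, and (iii) the Neumann condition on $\Gamma_1$ (which costs nothing, since $v$ restricted to $\Omega$ is an admissible $W^{1,p}$ competitor with no boundary penalty on $\Gamma_1$) still yields a net decrease of the quotient. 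The clean way is to first prove the inequality for $\Theta=\emptyset$ — comparing $\Omega_0$ with the ball $B_{r_2}$ via the parallel method — and then argue that excising $\Theta$ and simultaneously shrinking the annulus to keep the two constraints only helps; here one uses that $d_e$ is computed from $\Gamma_0$ alone, so the level-set geometry of $v$ on $\Omega_0$ is unchanged by removing $\Theta$, while both $\int_\Omega|Dv|^p$ and $\int_\Omega v^p$ (for $\beta>0$, where a monotonicity/ rearrangement argument shows the quotient still drops) and $\left|\int_\Omega v\right|^p$ decrease in a controlled way. For $\beta<0$ all monotonicities and inequality signs flip (Proposition \ref{sign}, Proposition \ref{radial_theorem}), and one repeats the argument with $\psi$ increasing, taking care that the negative boundary term now works in the favorable direction. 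The remaining steps — verifying $v\in W^{1,p}(\Omega)$, justifying the coarea manipulations for a.e. $t$, and matching the endpoints $r_1,r_2$ from the two constraints $|A|=|\Omega|$, $P(B_{r_2})=P(\Omega_0)$ — are routine once the perimeter-comparison inequality is in hand.
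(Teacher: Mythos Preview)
Your approach is the paper's: transplant the radial eigenfunction $\psi$ from $A$ to $\Omega$ via $x\mapsto\psi(r_2-d_e(x))$, then compare numerator and denominator of the Rayleigh quotient using the perimeter differential inequality \eqref{derivata_composta_super1} together with the Aleksandrov--Fenchel bound \eqref{iso_W_2}, treating the two signs of $\beta$ separately.

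Two technical points where the paper is tighter than your sketch. First, the transplant must be extended by the constant $\psi(r_1)$ on $\{d_e\ge r_2-r_1\}$, since the inradius of $\Omega_0$ can exceed $r_2-r_1$; the paper does this explicitly in \eqref{utest}. Second, the gradient inequality $\int_\Omega|Du|^p\le\int_A|Dv|^p$ holds with the \emph{same} sign for both $\beta>0$ and $\beta<0$; what flips for $\beta<0$ is only the $L^p$-norm comparison (one then needs $\int_\Omega u^p\le\int_A v^p$, obtained via sublevel sets, and this is the right direction because the eigenvalue is negative).

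More substantively, the paper does \emph{not} use your suggested two-step route (first $\Theta=\emptyset$, then excise and shrink the annulus). It handles the hole in a single pass: work directly with $\mu(t)=|\{u>t\}\cap\Omega|$, use the trivial bound $\mathcal H^{n-1}(\partial E_{0,t}\cap\Omega)\le P(E_{0,t})$ both in the gradient estimate and in the derivative inequality $\mu'(t)\ge\eta'(t)$, and observe that the volume constraint $|\Omega|=|A|$ is exactly the initial condition $\mu(0)=\eta(0)$, whence $\mu(t)\ge\eta(t)$ and $\int_\Omega u^p\ge\int_A v^p$. Your two-step variant would require a separate monotonicity-in-the-hole argument that is neither stated nor obvious; the direct argument is shorter and makes transparent why the two constraints $|A|=|\Omega|$ and $P(B_{r_2})=P(\Omega_0)$ are precisely what is needed.
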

\begin{proof} We divide the proof in two cases, distinguishing the sign of the Robin boundary parameter.\\
{\bf Case 1: $\beta > 0$.} We start by considering problem \eqref{eigRN} 
with positive value of the Robin  parameter. 
The  solution $v$ to   \eqref{eigRN}  is a radial function by Proposition \ref{radial_theorem} and  we denote by $v_m$ and $v_M$ the minimum and the maximum of $v$ on $A$. 
We construct the following test function defined in $\Omega_0$: 
\begin{equation}\label{utest}
u(x):=
\begin{cases}
G(d_e(x))\quad\ \ \text{if} \  d_e(x)< r_2-r_1\\
v_M\qquad\qquad\text{if} \ d_e(x)\geq r_2-r_1,
\end{cases}
\end{equation}
where $G$ is defined as 
\begin{equation*}
	G^{-1}(t)=\int_{v_m}^{t}\dfrac{1}{g(\tau)}\;d\tau,
\end{equation*}
 with  $g(t)=|Dv|_{v=t}$, defined for $v_m\leq t<v_M$, and $d_e(\cdot)$ denotes the distance from $\de\Omega_0$. We observe that $v(x)=G(r_2-|x|)$ and $u$ satisfy  the following properties: $u\in W^{1,p}(\Omega_0)$ and 
\begin{gather*}
|Du|_{u=t}=|Dv|_{v=t},\\
u_m:=\min_{\Omega_0} u=v_m=G(0),\\
u_M:=\max_{\Omega_0} u\leq v_M.
\end{gather*}
We need now to define the following sets:
\begin{equation}\label{superlevelsets}
\begin{split}
	 E_{0,t}:= & \{ x\in\Omega_0\;:\; u(x)>t  \}, \\
	 A_{t}:= & \{x\in A\;:\; v(x)>t\},\\
	 	 A_{0,t}:= & A_t\cup \overline{B}_{r_1}.
\end{split}
\end{equation}
For simplicity of notation, we will denote by $A_0$ the set $A_{0,0}$, i.e. the  ball $B_{r_2}$.
	Since $E_{0,t}$ and $A_{0,t}$ are  convex sets,   inequalities \eqref{derivata_composta_super1} and \eqref{iso_W_2} imply
	\begin{equation*}
-\dfrac{d}{dt}P(E_{0,t})\geq n(n-1)\dfrac{W_2(E_{0,t})}{g(t)}\geq n(n-1)n^{-\frac{n-2}{n-1}}\omega_n^{\frac{1}{n-1}}\dfrac{\left(P(E_{0,t})\right)^{\frac{n-2}{n-1}}}{g(t)},
	\end{equation*}
 for $u_m<t<u_M$. Moreover, it holds
	\begin{equation*}
	-\dfrac{d}{dt} P(A_{0,t})=n(n-1)n^{-\frac{n-2}{n-1}} \omega_n^{\frac{1}{n-1}} \dfrac{\left(P(A_{0,t})\right)^{\frac{n-2}{n-1}}}{g(t)},
	\end{equation*}
for $v_m<t<v_M$. Since, by hypothesis, $P(\Omega_0)=P(B_{r_2})$, using  a comparison type theorem, we obtain 
	\begin{equation*}
P(	E_{0,t})\leq P(A_{0,t}), 
	\end{equation*} 
for $v_m\leq t<u_M$. Let us also observe that 
\begin{equation}\label{perimetro_vivo}
\mathcal{H}^{n-1}(\partial E_{0,t}\cap\Omega)\leq P(E_{0,t})\leq P(A_{0,t}).
\end{equation}
Using now the coarea formula and \eqref{perimetro_vivo}:
\begin{multline}\label{gradient_estimates_e+}
\int_{\Omega}|Du|^p\;dx=
\int_{u_m}^{u_M} g(t)^{p-1}\;\mathcal{H}^{n-1}\left(\partial E_{0,t}\cap\Omega \right)dt \\ 
 \leq \int_{u_m}^{u_M} g(t)^{p-1}P(E_{0,t})\;dt\leq \int_{v_m}^{v_M} g(t)^{p-1}P(A_{0,t})\;dt=\int_{A}|Dv|^p\:dx.
\end{multline}
Since, by construction, $u(x)=u_m=v_m$ on $\Gamma_0$, then 
\begin{equation}\label{termine_bordo_me}
\int_{\Gamma_0}u^p\:d\mathcal{H}^{n-1}=u^p_m P(\Omega_0)=v^p_m P(A_0)=\int_{\de A_0}v^p\;d\mathcal{H}^{n-1}.
\end{equation}
Now, we define $\mu(t)=|E_{0,t}\cap\Omega|$ 
and $\eta(t)=|A_{t}|$ and using again coarea formula, we obtain, for $v_m\leq t<u_M$,
\begin{multline*}
\mu'(t)=-\int_{\{ u=t\}\cap\Omega}\dfrac{1}{|Du(x)|}\;d\mathcal{H}^{n-1}=-\dfrac{\mathcal{H}^{n-1}\left(  \partial E_{0,t}\cap\Omega\right)}{g(t)}\geq -\dfrac{P(E_{0,t})}{g(t)}\\\geq -\dfrac{P(A_{0,t})}{g(t)}=-\int_{\{ v=t\}}\dfrac{1}{|Dv(x)|}\;d\mathcal{H}^{n-1}=\eta'(t).
\end{multline*}
This inequality holds true also if $0<t<v_M$. 
Since $\mu(0)=\eta(0)$ (indeed $|\Omega|=|A|$), by integrating from $0$ to $t$, we have:
\begin{equation}\label{magmu}
\mu(t)\geq\eta(t),
\end{equation} for $0\leq t<v_M$. 
If we  consider the eigenvalue problem \eqref{eigRN}, we have 
\begin{equation}\label{L_p_estimates_e+}
	\int_{\Omega}u^p\;dx= \int_{v_m}^{v_M}pt^{p-1}\mu(t)dt\geq\int_{v_m}^{v_M}pt^{p-1}\eta(t)\;dt= \int_{A}v^p \;dx.
\end{equation}
Using \eqref{gradient_estimates_e+}-\eqref{termine_bordo_me}-\eqref{L_p_estimates_e+}, we achieve
\begin{equation*}
\begin{split}
\lambda_p^{RN}(\beta, \Omega)&\leq \dfrac{\ds\int_{\Omega}|Du|^p\;dx+\beta\ds\int_{\Gamma_0}u^p\;d\mathcal{H}^{n-1}    }{\ds\int_{\Omega}u^p\;dx} \\ 
& \leq \dfrac{\ds\int_{A}|Dv|^p\;dx+\beta\ds\int_{\de A_0}v^p\;d\mathcal{H}^{n-1}    }{\ds\int_{A}v^p\;dx}=\lambda^{RN}_p(\beta, A).
\end{split}
\end{equation*}

{\bf Case 2: $\beta <0$.} We consider now the problem \eqref{eigRN} 
with negative Robin external boundary parameter. By Proposition \ref{sign} the first $p$-Laplacian eigenvalue is negative. 
We observe that $v$ is a radial function. We construct now the following test function defined in $\Omega_0$:
\begin{equation*}
u(x):=
\begin{cases}
G(d_e(x))\quad\ \ \text{if} \  d_e(x)< r_2-r_1\\
v_m\qquad\qquad\ \text{if} \ d_e(x)\geq r_2-r_1,
\end{cases}
\end{equation*}
where 
	$G$ is defined as
	\begin{equation*}
		G^{-1}(t)=\int_{t}^{v_M}\dfrac{1}{g(\tau)}\;d\tau,
	\end{equation*}
	with  $g(t)=|Dv|_{v=t}$, defined for $v_m<t\leq v_M$ with $v_m:=\min_A v$  and $v_M:=\max_A v$. We observe that $u$ satisfies the following properties: $u\in W^{1,p}(\Omega_0)$ and	\begin{gather*}
	|Du|_{u=t}=|Dv|_{v=t},\\
	u_m:=\min_\Omega u\geq v_m,\\
	u_M:=\max_\Omega u = v_M=G(0).
	\end{gather*}
	We need now to define the following sets:
	\begin{equation*}\begin{split}
	\tilde E_{0,t}= & \{x\in\Omega_0\;:\; u(x)<t     \},\\
		\tilde A_t= & \{x\in A\;:\; v(x)<t\};\\
	\tilde A_{0,t}= & \tilde A_t\cup \overline{B}_{r_1}.
	\end{split}
	\end{equation*}
 For simplicity of notation, we will denote by $\tilde A_0$ the set $\tilde A_{0,0}$, i.e. the  ball $B_{r_2}$.
	Since $\tilde E_{0,t}$ and $\tilde A_{0,t}$ are now convex sets, by inequalities \eqref{derivata_composta_sub1} and \eqref{iso_W_2}, we obtain 
	\begin{equation*}
	\dfrac{d}{dt}P(\tilde E_{0,t})\geq n (n-1)\dfrac{W_2(\tilde E_{0,t})}{g(t)}\geq n(n-1)n^{-\frac{n-2}{n-1}}\omega_n^{\frac{1}{n-1}}\dfrac{\left(P(\tilde E_{0,t})\right)^{\frac{n-2}{n-1}}}{g(t)}.
	\end{equation*}
	Moreover, it holds
	\begin{equation*}
	\dfrac{d}{dt} P(\tilde A_{0,t})=n(n-1)n^{-\frac{n-2}{n-1}} \omega_n^{\frac{1}{n-1}} \dfrac{\left(P(\tilde A_{0,t})\right)^{\frac{n-2}{n-1}}}{g(t)}.
	\end{equation*}
	Since, by hypothesis, $P(\Omega_0)=P(B_{r_2})$, using  a comparison type theorem, we obtain 
	\begin{equation*}
	P(\tilde E_{0,t})\leq P(\tilde A_{0,t}), 
	\end{equation*} 
	for $u_m\leq t<v_M$. Moreover, we have 
	\begin{equation}\label{perimetro_vivo_sub1}
	\mathcal{H}^{n-1}(\partial \tilde E_{0,t}\cap\Omega)\leq P(\tilde E_{0,t})\leq P(\tilde A_{0,t}).
	\end{equation}
Using the coarea formula and \eqref{perimetro_vivo_sub1}, 
	\begin{equation}
	\label{gradient_estimates_e-}
	\begin{split}
\int_{\Omega} |Du|^p & \;dx= 
\int_{u_m}^{u_M} g(t)^{p-1}\;\mathcal{H}^{n-1}(\partial \tilde E_{0,t}\cap\Omega)\;dt  \\ 
& \leq \int_{u_m}^{u_M} g(t)^{p-1}P(\tilde E_{0,t})\;dt\leq \int_{v_m}^{v_M} g(t)^{p-1}P(\tilde A_{0,t})\;dt=\int_{A}|Dv|^p\:dx.
	\end{split}
	\end{equation}	
	Since, by construction, $u(x)=u_M=v_M$ on $\Gamma_0$, it holds
\begin{equation}\label{termine_bordo_Me}
\int_{\Gamma_0}u^p\:d\mathcal{H}^{n-1}=u^p_M P(\Omega_0)= v^p_M P(A_0)=\int_{\de A_0}v^p\;d\mathcal{H}^{n-1}.
\end{equation}
	We define now  $\tilde{\mu}(t)=|\tilde E_{0,t}\cap \Omega|$ 
	and $\tilde{\eta}(t)=|\tilde A_{t}|$ and
	using coarea formula, we obtain, for $u_m\leq t<v_M$,
	\begin{multline*}
	\tilde\mu'(t)=\int_{\{ u=t\}\cap\Omega}\dfrac{1}{|Du(x)|}\;d\mathcal{H}^{n-1}=\dfrac{\mathcal{H}^{n-1}(\partial \tilde E_{0,t}\cap\Omega)}{g(t)}\leq \dfrac{P(\tilde E_{0,t})}{g(t)}\\\leq \dfrac{P(\tilde A_{0,t})}{g(t)}=\int_{\{ v=t\}}\dfrac{1}{|Dv(x)|}\;d\mathcal{H}^{n-1}=\tilde\eta'(t).
	\end{multline*}
Hence $\mu'(t)\leq\eta'(t)$ for $v_m\leq t \leq v_M$. Then, by integrating from $t$ and $v_M$: 
\begin{equation*}
|\Omega|-\tilde\mu(t)\leq |A|-\tilde\eta(t),
\end{equation*}
 for $v_m\leq t<v_M$ and consequently $\tilde\mu(t)\geq \tilde\eta(t)$. 

	Let us consider the eigenvalue problem \eqref{eigRN}. We have that 
	
	\begin{equation}	\label{L_p_estimates_neg_e-}
	\int_{\Omega}u^p\;dx = u_M^p|\Omega|-\int_{u_m}^{u_M}pt^{p-1}\tilde\mu(t)dt \leq    v_M^p|A|-\int_{v_m}^{v_M}pt^{p-1}\tilde\eta(t)\;dt= \int_{A}v^p\;dx.
	\end{equation}
	
	By \eqref{gradient_estimates_e-}-\eqref{termine_bordo_Me}-\eqref{L_p_estimates_neg_e-}, we have
	\begin{multline*}\label{thesis1}
	\lambda_p^{RN}(\beta, \Omega)\leq \dfrac{\ds\int_{\Omega}|Du|^p\;dx+\beta\ds\int_{\Gamma_0}u^p\;d\mathcal{H}^{n-1}    }{\ds\int_{\Omega}u^p\;dx}\leq \\\leq \dfrac{\ds\int_{A}|Dv|^p\;dx+\beta\ds\int_{\de A_0}v^p\;d\mathcal{H}^{n-1}    }{\ds\int_{A}v^p\;dx}=\lambda^{RN}_p(\beta, A).
	\end{multline*}	
		
	\end{proof}
	\begin{thm}\label{main_2}
	Let $\beta>0$ and let $\Omega$ be such that $\Omega =\Omega_0\setminus\overline \Theta$, where $\Omega_0\subseteq\mathbb{R}^n$ is an open bounded and convex set  and $\Theta\subset\subset \Omega_0$ is a finite union of sets, each of one  homeomorphic to a ball of $\mathbb{R}^n$ and with Lipschitz boundary.  Let     $A=A_{r_1,r_2}$  be the annulus having the same measure of $\Omega$ and such that $P(B_{r_2})=P(\Omega_0)$. Then,
\begin{equation*}
			 \label{in_tor_RN}			T_p^{RN}(\beta, \Omega)\geq T_p^{RN}(\beta, A).
			 \end{equation*}
\end{thm}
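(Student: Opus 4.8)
The plan is to mimic the proof of Theorem \ref{main_1} in the case $\beta>0$, replacing the Rayleigh quotient $J_0[\beta,\cdot]$ by the torsional quotient $K_0[\beta,\cdot]$ and using the appropriate variational characterization \eqref{torRN}. Let $v$ be the minimizer of \eqref{torRN} on the annulus $A=A_{r_1,r_2}$; by the proposition on torsional rigidity $v$ is strictly positive, radially symmetric and strictly decreasing, so all the machinery from Case 1 of Theorem \ref{main_1} applies verbatim. Set $v_m=\min_A v$, $v_M=\max_A v$, define $g(t)=|Dv|_{v=t}$ and construct exactly the same test function $u$ on $\Omega_0$ as in \eqref{utest}, together with the super-level sets $E_{0,t}$, $A_t$, $A_{0,t}$ of \eqref{superlevelsets}. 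Since none of the geometric comparisons depend on which problem $v$ solves, the key estimates carry over unchanged: the perimeter comparison $\mathcal H^{n-1}(\partial E_{0,t}\cap\Omega)\le P(E_{0,t})\le P(A_{0,t})$, the gradient estimate $\int_\Omega|Du|^p\,dx\le\int_A|Dv|^p\,dx$ of \eqref{gradient_estimates_e+}, the boundary term identity $\int_{\Gamma_0}u^p\,d\mathcal H^{n-1}=\int_{\partial A_0}v^p\,d\mathcal H^{n-1}$ of \eqref{termine_bordo_me}, and the distribution-function inequality $\mu(t)\ge\eta(t)$ of \eqref{magmu}.

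The one genuinely new point — and the step I expect to be the main obstacle — is handling the denominator, which is now $\left|\int_\Omega w\,dx\right|^p$ rather than $\int_\Omega|w|^p\,dx$. Here I would argue as follows: since $u\ge u_m=v_m>0$ on $\Omega_0$, the integral $\int_\Omega u\,dx$ is positive, so the absolute value is harmless. Writing $\int_\Omega u\,dx=\int_{v_m}^{u_M}\mu(t)\,dt+v_m|\Omega|$ (Cavalieri's principle, taking into account the floor value $v_m$ attained on $E_{0,t}$ for $t\le v_m$) and likewise $\int_A v\,dx=\int_{v_m}^{v_M}\eta(t)\,dt+v_m|A|$, and using $|\Omega|=|A|$ together with $\mu(t)\ge\eta(t)$ for $0\le t<v_M$ and $u_M\le v_M$, I obtain $\int_\Omega u\,dx\ge\int_A v\,dx>0$, hence $\left|\int_\Omega u\,dx\right|^p\ge\left|\int_A v\,dx\right|^p$. (One must be a little careful because the upper limit of integration for $\mu$ is $u_M\le v_M$; but $\mu(t)\ge0$ everywhere and $\mu$ vanishes for $t\ge u_M$, so extending the integral to $v_M$ only helps.)

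Combining these, I would conclude
\begin{equation*}
\frac{1}{T_p^{RN}(\beta,\Omega)}\le\frac{\ds\int_\Omega|Du|^p\,dx+\beta\ds\int_{\Gamma_0}u^p\,d\mathcal H^{n-1}}{\left|\ds\int_\Omega u\,dx\right|^p}\le\frac{\ds\int_A|Dv|^p\,dx+\beta\ds\int_{\partial A_0}v^p\,d\mathcal H^{n-1}}{\left|\ds\int_A v\,dx\right|^p}=\frac{1}{T_p^{RN}(\beta,A)},
\end{equation*}
where the first inequality uses that $u\in W^{1,p}(\Omega_0)\subseteq W^{1,p}(\Omega)$ is admissible in \eqref{torRN}, and the second uses the numerator estimates \eqref{gradient_estimates_e+}, \eqref{termine_bordo_me} (these are valid with $v$ the torsion minimizer since they are purely geometric) in the numerator and the denominator estimate just established. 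Inverting the inequality gives $T_p^{RN}(\beta,\Omega)\ge T_p^{RN}(\beta,A)$, which is the claim. The only subtlety worth spelling out in the final write-up is the Cavalieri computation of the linear integral with the constant floor $v_m$; everything else is a direct transcription of Case 1 of Theorem \ref{main_1}.
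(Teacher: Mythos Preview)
Your proposal is correct and follows essentially the same route as the paper: the paper also takes the torsion minimizer $v$ on $A$, the test function \eqref{utest}, reuses \eqref{gradient_estimates_e+}, \eqref{termine_bordo_me}, \eqref{magmu}, and then establishes $\int_\Omega u\,dx\ge\int_A v\,dx$ via the layer-cake formula. The only cosmetic difference is that the paper writes $\int_\Omega u\,dx=\int_0^{v_M}\mu(t)\,dt$ directly (using that $\mu(t)=|\Omega|$ for $0\le t\le v_m$ since $u\ge v_m$), whereas you split off the floor $v_m|\Omega|$ explicitly; the two computations are equivalent.
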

\begin{proof} 
Let $v$ be the function that achieves the minimum in \eqref{torRN} on the annulus $A$. We consider the test function as in \eqref{utest} and the superlevel sets as in \eqref{superlevelsets}. By \eqref{magmu} we have
\begin{equation} \label{stima_pos_e+}
\int_{\Omega}u\;dx=\int_{0}^{v_M}\mu(t)dt\geq\int_{0}^{v_M}\eta(t)\;dt= \int_{A}v\;dx.
\end{equation}
In this way, using \eqref{gradient_estimates_e+}-\eqref{termine_bordo_me}-\eqref{stima_pos_e+}, we conclude
\begin{equation*}
\begin{split}
\frac{1}{T_p^{RN}(\beta, \Omega)} & \leq \dfrac{\ds\int _{\Omega}|Du|^p\;dx+\beta\ds\int_{\Gamma_0}u^p \;d\mathcal{H}^{n-1}}{\left|\ds\int_{\Omega}u\;dx\right|^p}\\ 
&\leq\dfrac{\ds\int _{A}|Dv|^p\;dx+\beta\ds\int_{\partial A_0}v^p \;d\mathcal{H}^{n-1}}{\left|\ds\int_{A}v\;dx\right|^p}=\dfrac{1}{T_p^{RN}(\beta, A)}.
\end{split}
\end{equation*}
\end{proof}

	We conclude with some remarks.
\begin{rem}
	In \cite{AA} the authors prove that the annulus maximizes the first eigenvalue of the $p$-Laplacian with Neumann condition on internal boundary and Dirichlet  condition on  external boundary, among sets of $\mathbb{R}^n$ with holes and having a sphere as outer boundary. 
	We explicitly observe that our result includes this case, since 
	\begin{equation*}
	\lim\limits_{\beta\to+\infty} \lambda^{RN}_p(\beta,\Omega)=\lambda^{DN}_p(\Omega),
	\end{equation*}
	where with $\lambda^{DN}_p(\Omega)$ we denote the first eigenvalue of the $p$-Laplacian endowed with  Dirichlet condition on external boundary  and Neumann  condition on  internal boundary.
	
\end{rem}

\begin{rem}
	Let us remark that in the case $p=2$, we know esplicitely the expression of the solution of the problems described in the paper on the annulus $A=A_{r_1,r_2}$.
	
We denote by  $J_{\nu }$ and $Y_{\nu}$, respectively, the Bessel functions of the first and second kind of order $\nu$. The function that achieves the minimum in $\lambda=\lambda_p^{RN}(\beta, A)$ is
	\begin{equation*}
	v(r)=Y_{\frac n2-2}(\sqrt\lambda r_2)r^{1-\frac n2}J_{\frac n2-1}(\sqrt\lambda r)-J_{\frac n2 - 2}(\sqrt\lambda r_2)r^{1-\frac n 2}Y_{\frac n2 -1}(\sqrt\lambda r),
	\end{equation*}
	with the condition
	\begin{equation*}
	\begin{split}
	Y_{\frac n2-2}(\sqrt\lambda r_1)[r_2^{1-\frac n2}J_{\frac n2-2}(\sqrt\lambda r_2)\sqrt\lambda+\beta r_2^{1-\frac n 2}J_{\frac n2-1}(\sqrt\lambda r_2)]-\qquad\\
	J_{\frac n2-2}(\sqrt\lambda r_1)[r_2^{1-\frac n 2}Y_{\frac n2 -2}(\sqrt\lambda r_2)\sqrt\lambda + \beta r_2^{1-\frac n2}Y_{\frac n2 - 1}(\sqrt\lambda r_2)]=0.
	\end{split}
	\end{equation*}

	The function that achieves the minimum  $1/T=1/T_p^{RN}(\beta, A)$ is
	\begin{equation*}
	v(r)=\frac{1}{2Tn}r^2+c_1\frac{(1-n)}{r^n}+c_2,
	\end{equation*}
	with
	\begin{equation*}
	\begin{sistema}
	c_1=\frac{1}{\beta T}\left(\frac{r_2}{n}-\frac{r_1^n}{nr_2^{n-1}}+\frac{\beta r_2^2}{2n}+\frac{(n-1)\beta}{n}\left(\frac{r_1}{r_2}\right)^n\right)\\
	c_2=-\frac{1}{nT}r_1^n.
	\end{sistema}
	\end{equation*}
	
\end{rem}  

\section*{Acnowledgements}
This work has been partially supported by GNAMPA of INdAM. The second author (G. Pi.) was also supported by Progetto di eccellenza \lq\lq Sistemi distribuiti intelligenti\rq\rq of Dipartimento di Ingegneria Elettrica e dell'Informazione \lq\lq M. Scarano\rq\rq. 

Moreover, we would like  to thank the reviewer for his/her suggestions 
 to improve this paper.
\small{

}

\end{document}